\newtheorem{theorem}{Theorem}
\newtheorem{lemma}[theorem]{Lemma}
\theoremstyle{definition}
\newtheorem{conjecture}[theorem]{Conjecture}
\theoremstyle{remark}
\newcommand{\h}{\mathcal{H}}
\title{\bf An Erd\H{o}s-Gallai type theorem for uniform hypergraphs}
\author{
Akbar Davoodi\thanks{
School of Mathematics, Institute for Research in Fundamental Sciences (IPM),
P.O.Box: 19395-5746, Tehran, Iran, 
E-mail address: davoodi@ipm.ir
} \qquad Ervin Gy\H{o}ri \thanks{MTA Renyi Institute/ Dept. of Mathematics, Central European University (Budapest), E-mail address: gyori.ervin@renyi.mta.hu} \qquad Abhishek Methuku \thanks{Dept. of Mathematics, Central European University (Budapest), E-mail address: abhishekmethuku@gmail.com} \qquad Casey Tompkins \thanks{MTA Renyi Institute (Budapest), E-mail address:  ctompkins496@gmail.com}\\[4pt]
}
\date{}
\begin{document}

\maketitle



\begin{abstract}
A well-known theorem of Erd\H{o}s and Gallai \cite{Er-Ga} asserts that a graph with no path of length $k$ contains at most $\frac{1}{2}(k-1)n$ edges.  Recently Gy\H{o}ri, Katona and Lemons \cite{GyoKaLe} gave an extension of this result to hypergraphs by determining the maximum number of hyperedges in an $r$-uniform hypergraph containing no Berge path of length $k$ for all values of $r$ and $k$ except for $k=r+1$.  We settle the remaining case by proving that an $r$-uniform hypergraph with more than $n$ edges must contain a Berge path of length $r+1$.
\end{abstract}

Given a hypergraph $\h$, we denote the vertex and edge sets of $\h$ by $V(\h)$ and $E(\h)$ respectively.   Moreover, let $e(\h) = \left| E(\h) \right|$ and $n(\h) =\left| V(\h) \right| $.

A \emph{Berge path} of length $k$ is a collection of $k$ distinct hyperedges $e_1,\dots,e_k$ and $k+1$ distinct vertices $v_1,\dots, v_{k+1}$ such that for each $1 \le i \le k$, we have $v_i,v_{i+1} \in e_i$.  A \emph{Berge cycle} of length $k$ is a collection of $k$ distinct hyperedges $e_1,\dots,e_k$ and $k$ distinct vertices $v_1,\dots, v_{k}$ such that for each  $1 \le i \le k-1$, we have $v_i,v_{i+1} \in e_i$ and $v_k,v_1 \in e_k$.  The vertices $v_i$ and edges $e_i$ in the preceding definitions are called the vertices and edges of their respective Berge path (cycle).  The Berge path is said to start at the vertex $v_1$.  We also say that the edges $e_1,\dots,e_k$ of the Berge path (cycle) span the set $\cup_{i=1}^k e_i$.  

A hypergraph is called $r$-uniform, if all of its hyperedges have size $r$.
Gy\H{o}ri, Katona and Lemons determined the largest number of hyperedges possible in an $r$-uniform hypergraph without a Berge path of length $k$ for both the range $k>r+1$ and the range $k \le r$.
\begin{theorem}[Gy\H{o}ri--Katona--Lemons, \cite{GyoKaLe}]
Let $\h$ be an $r$-uniform hypergraph with no Berge path of length $k$.  If $k>r+1>3$, we have
\begin{displaymath}
e(\h) \le \frac{n}{k} \binom{k}{r}.
\end{displaymath}
If $r \ge k>2$, we have
\begin{displaymath}
e(\h) \le \frac{n(k-1)}{r+1}.
\end{displaymath}
\end{theorem}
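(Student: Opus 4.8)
The two stated bounds are attained by disjoint unions of extremal components --- copies of the complete $r$-uniform hypergraph $K_k^{(r)}$ on $k$ vertices in the range $k>r+1$, and copies of an $(r+1)$-vertex hypergraph carrying $k-1$ edges in the range $r\ge k$ --- so I will concentrate on the upper bounds. My first step is to reduce to the case that $\h$ is connected: both target bounds have the form $e(\h)\le c\cdot n(\h)$, with $c=\frac1k\binom kr$ and $c=\frac{k-1}{r+1}$ respectively, and such a bound is additive over connected components, so it suffices to prove $e(C)\le c\cdot n(C)$ for every connected component $C$. (One should check the estimate is not violated by small components; for a single edge this amounts to $\frac rk\binom kr=\binom{k-1}{r-1}\ge1$ in the first range and to $r(k-2)\ge1$ in the second, both of which hold.)

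For a connected $\h$ I would analyse a longest Berge path $P=v_0e_1v_1\cdots e_\ell v_\ell$; since $\h$ has no Berge path of length $k$ we have $\ell\le k-1$. Maximality forces strong local constraints at the endpoints: any edge containing $v_0$ that is not one of $e_1,\dots,e_\ell$ must have all its vertices inside the vertex set $\{v_0,\dots,v_\ell\}$ of $P$, for otherwise a vertex outside $P$ together with that edge would prolong the path. Here the two ranges separate. When $k>r+1$ the path has up to $k$ vertices, an $r$-set fits inside them, and the relevant count is of order $\binom kr$; when $r\ge k$ the path has at most $\ell+1\le r$ vertices, so an $r$-edge cannot be confined to $P$ at all (save for the degenerate case $k=r$), and the endpoint degrees are pinned down far more tightly. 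I would also bring in Berge analogues of P\'osa rotations, rerouting $P$ through an alternative edge to expose new endpoints and hence new forbidden configurations.

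The counting step is organised around the fact that the extremal configurations are whole components rather than single vertices, so the natural induction deletes the entire vertex set of $P$ (a block of $\ell+1\le k$ vertices) at once and applies the inductive hypothesis to the remainder. When $\h$ is spanned by $P$ the bound already drops out of a binomial identity: with $n=\ell+1\le k$ one has $e(\h)\le\binom{\ell+1}{r}=(\ell+1)\cdot\frac1r\binom{\ell}{r-1}\le(\ell+1)\cdot\frac1r\binom{k-1}{r-1}=\frac{\ell+1}{k}\binom kr$, which is exactly the first bound and is sharp precisely at $K_k^{(r)}$; the second range is handled by the parallel, tighter endpoint estimate. What remains is to account for the edges and vertices lying off $P$, namely the edges that meet $P$ but also reach into the rest of the connected hypergraph.

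That last point is where I expect the real difficulty to lie. Because a hyperedge carries $r$ vertices, a single edge can meet $P$ in several places while hanging several vertices outside it, and the same edge can participate in many different reroutings of the path; consequently the clean ``the edge lives inside $P$'' dichotomy available for ordinary graphs fragments into a delicate case analysis of how edges intersect $P$, and this must be pushed through to get the constant $\frac1r$ (respectively $\frac1{r+1}$) exactly right. It is also the reason the boundary value $k=r+1$ resists both arguments --- the first range's room $k-(r+1)$ collapses to zero while the second range's hypothesis $r\ge k$ just fails --- so that precisely this case must be treated separately; reassuringly, the first formula evaluated at $k=r+1$ gives $\frac n{r+1}\binom{r+1}{r}=n$, which is exactly the threshold the present paper establishes.
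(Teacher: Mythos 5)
This statement is the quoted Gy\H{o}ri--Katona--Lemons theorem from \cite{GyoKaLe}; the present paper does not prove it, it only cites it as background before settling the missing case $k=r+1$. So there is no in-paper proof to compare your argument against, and your proposal has to stand on its own.

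As a self-contained proof it does not: it is an outline whose central step is missing. The reduction to connected components, the lower-bound constructions, the endpoint constraint on a longest Berge path (an edge through $v_0$ outside $\{e_1,\dots,e_\ell\}$ must live inside $V(P)$), and the computation $\binom{\ell+1}{r}=\frac{\ell+1}{r}\binom{\ell}{r-1}\le\frac{n}{k}\binom{k}{r}$ for the case $V(\h)=V(P)$ are all correct, but they only dispose of the trivial situation where every edge is confined to the path. The entire content of the theorem is the case you defer in your last two paragraphs: a connected hypergraph whose edges meet $P$ while reaching outside it, where a single hyperedge can intersect $P$ in many vertices, serve in many reroutings, and be double-counted in any naive charging scheme. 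You name this difficulty and gesture at Berge analogues of P\'osa rotations and at ``deleting the vertex set of $P$ and inducting,'' but you do not specify the charging or the induction step, and it is exactly here that the constants $\frac1k\binom kr$ and $\frac{k-1}{r+1}$ have to be extracted. Note also that the two ranges genuinely require different arguments (the $r\ge k$ bound has a different shape and a different extremal structure), so a single longest-path scheme would have to bifurcate in a way you have not worked out. If you want a complete argument you should consult \cite{GyoKaLe} itself, where the $k>r+1$ case is handled by an induction with a carefully chosen weight function on the edges meeting a maximal path; reproducing that is well beyond what your sketch currently contains.
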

The case when $k=r+1$ remained unsolved. Gy\H{o}ri, Katona and Lemons conjectured that the upper bound in this case should have the same form as the $k>r+1$ case:

\begin{conjecture}[Gy\H{o}ri--Katona--Lemons, \cite{GyoKaLe}] Fix $k = r+1>2$ and let $\h$ be an $r$-uniform hypergraph containing no Berge path of length $k$.  Then,
\begin{displaymath}
e(\h) \le \frac{n}{k} \binom{k}{r} = n.
\end{displaymath}
\end{conjecture}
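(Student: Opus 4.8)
The plan is to prove the equivalent contrapositive: if the $r$-uniform hypergraph $\h$ contains no Berge path of length $r+1$, then $e(\h)\le n(\h)$. The cleanest route to this inequality is to exhibit a \emph{system of distinct representatives} (SDR) for the hyperedges, that is, an injection $\psi\colon E(\h)\to V(\h)$ with $\psi(f)\in f$ for every edge $f$; the mere existence of such a $\psi$ forces $e(\h)\le n(\h)$. This target is consistent with the equality case: for a disjoint union of complete $r$-uniform hypergraphs on $r+1$ vertices (the conjectured extremal configuration), assigning to each $r$-set its unique missing vertex is a \emph{bijection} between edges and vertices. Thus it suffices to show that a hypergraph with no Berge path of length $r+1$ admits an SDR.

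I would argue by contradiction through Hall's theorem. If no SDR exists, choose a subfamily $\mathcal{F}\subseteq E(\h)$ of \emph{minimum cardinality} violating Hall's condition, and write $m=|\mathcal{F}|$ and $U=\bigcup_{f\in\mathcal F}f$, so $|U|\le m-1$. Minimality yields a clean structure. Deleting any single edge restores Hall's condition, whence $|U|=m-1$, every edge has all its vertices covered by the others, and so every vertex of $U$ lies in at least two edges of $\mathcal F$. Moreover the incidence bipartite graph of $\mathcal F$ is connected: if it were disconnected, each component's edge set would be a proper subfamily, hence satisfy Hall's condition by minimality, and summing those inequalities over components would give $|U|\ge m$, a contradiction. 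Finally, since $\mathcal F$ consists of $m$ distinct $r$-subsets of $U$ we need $m=|U|+1\le\binom{|U|}{r}$, and a direct check rules out $|U|\le r+1$; hence $|U|\ge r+2$. In summary, $\mathcal F$ is a connected $r$-uniform hypergraph on exactly $|U|\ge r+2$ vertices, with $|U|+1\ge r+3$ edges, in which every vertex has degree at least $2$.

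It remains to locate a Berge path of length $r+1$ inside $\mathcal F$; this is the crux and, I expect, the main obstacle. Passing to the incidence bipartite graph $G$ with parts $\mathcal F$ and $U$, a Berge path of length $r+1$ is precisely a path of $G$ on $2r+3$ vertices with both endpoints in $U$, where $G$ is a connected bipartite graph in which every vertex of $\mathcal F$ has degree exactly $r$, every vertex of $U$ has degree at least $2$, and $|\mathcal F|=|U|+1$. The difficulty is that this lies exactly at the Erd\H{o}s--Gallai threshold: $G$ has about $r$ edges per vertex, so generic long-path bounds (e.g.\ applying the graph Erd\H{o}s--Gallai theorem to $G$ directly) are too weak by a constant factor, and one must exploit the exact degree information, degree $r$ on the $\mathcal F$-side, rather than only a minimum-degree hypothesis. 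The natural tool is to take a longest path (equivalently a longest Berge path in $\mathcal F$) and run a rotation and extension analysis \`a la P\'osa: an endpoint lying in $\mathcal F$ already forces at least $r$ vertices of $U$ onto the path, and the remaining gain of two vertices should come from combining rotations at both endpoints with the facts that every vertex of $U$ has degree at least $2$ and that the average degree on the $U$-side strictly exceeds $r$, so that not every vertex of $U$ can have degree $2$.

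As an alternative organization that isolates the same core, one may first reduce to the case $e(\h)=n(\h)+1$ by induction on the number of edges: if $e(\h)\ge n(\h)+2$, then deleting any edge leaves a hypergraph that still has more edges than vertices, so by induction it already contains the desired Berge path, which persists in $\h$. Either way, the entire content of the theorem concentrates in the extremal path argument of the previous paragraph, which I expect to require a careful, case-based treatment of how a longest Berge path interacts with the degree-at-least-$2$ condition at the vertices it avoids.
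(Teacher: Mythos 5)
Your Hall's-theorem reduction is correct as far as it goes: a minimum-cardinality family $\mathcal F$ violating Hall's condition does satisfy $|U|=m-1$, is connected, has every vertex of $U$ in at least two edges, and has $|U|\ge r+2$; and since any subfamily of $\h$ inherits the absence of Berge paths of length $r+1$, exhibiting such a path inside $\mathcal F$ would indeed finish the proof. The genuine gap is that this reduction removes essentially none of the difficulty. The statement you are left with --- a connected $r$-uniform hypergraph on $|U|$ vertices with $|U|+1$ edges contains a Berge path of length $r+1$ --- is, up to the routine component and edge-deletion reductions you also mention, the whole theorem. You explicitly defer it to a hoped-for P\'osa rotation--extension analysis in the incidence bipartite graph, but you never carry that analysis out, and there is no evidence it closes: as you observe yourself, the parameters sit exactly at the Erd\H{o}s--Gallai threshold, where generic longest-path and rotation arguments lose a constant factor and the case analysis is precisely the hard part. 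The minimum-degree-$2$ condition you gain from minimality is real but modest, and nothing in the proposal shows it suffices. As written, the proof of $e(\h)\le n$ rests entirely on an unproven lemma that carries all of the content.

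For contrast, the paper closes this gap by proving a strengthened statement designed for induction on the uniformity $r$ (and, for each $r$, on $n$): in a connected $r$-uniform hypergraph with $e(\h)\ge n(\h)$, \emph{every} vertex is the start of a Berge path of length $r+1$ or lies on a Berge cycle of length $r+1$. The induction step deletes a chosen vertex $v$ and an edge $e$ through it, selects a component with at least as many edges as vertices, and shrinks each $r$-edge there to an $(r-1)$-edge while preserving connectivity and avoiding multiple edges; the two obstructions to shrinking (all $(r-1)$-subsets of an edge already present, or every deletion disconnecting) are handled by an explicit length-$r$ Berge cycle plus a cycle-extension lemma, and by a cut-vertex decomposition, respectively. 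None of these mechanisms --- the stronger induction hypothesis, the reduction in uniformity, the cycle lemma --- appears in your proposal, so to repair it you would need to supply a complete proof of your crux lemma, which would amount to reproving the paper's main induction in another guise.
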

In this note we settle their conjecture by proving
\begin{theorem}
\label{main}
Let $\h$ be an $r$-uniform hypergraph.   If $e(\h) > n$, then $\h$ contains a Berge path of length at least $r+1$.
\end{theorem}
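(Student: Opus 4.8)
The plan is to prove the contrapositive in the sharper form: if an $r$-uniform hypergraph $\h$ has no Berge path of length $r+1$, then $e(\h)\le n(\h)$. This is equivalent to the theorem, since any Berge path of length greater than $r+1$ contains an initial segment that is a Berge path of length exactly $r+1$, so ``no Berge path of length $\ge r+1$'' and ``no Berge path of length $r+1$'' coincide. A first reduction is to the connected case: if every connected component $C$ satisfied $e(C)\le n(C)$, then summing over components would give $e(\h)=\sum_C e(C)\le\sum_C n(C)=n(\h)$. Hence it suffices to show that a connected $\h$ with no Berge path of length $r+1$ has $e(\h)\le n(\h)$; equivalently, that a connected hypergraph with $e(\h)>n(\h)$ contains a Berge path of length $\ge r+1$.

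The main device is a longest Berge path $P=v_1e_1v_2\cdots e_\ell v_{\ell+1}$, where by assumption $\ell\le r$; write $U=V(P)$, so $|U|=\ell+1\le r+1$. Maximality gives the usual non-extendability constraints at the endpoints: there is no edge $f\notin\{e_1,\dots,e_\ell\}$ containing $v_1$ together with a vertex $u$ outside $U$, since the walk $u,f,v_1,e_1,v_2,\dots$ would then be a Berge path of length $\ell+1$. Thus every edge through $v_1$ either lies in $E(P)$ or is contained in $U$, and symmetrically for $v_{\ell+1}$. In particular, if $V(\h)=U$ then every edge is an $r$-subset of the at most $r+1$ vertices of $U$, so $e(\h)\le\binom{|U|}{r}\le|U|=n(\h)$ (using $\binom{m}{r}\le m$ for $m\le r+1$) and we are done. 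The substance is therefore to control the edges meeting the vertices outside $U$.

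To carry this out I would aim to construct an injection from $E(\h)$ into $V(\h)$, which by Hall's theorem is equivalent to the condition $|\bigcup_{f\in S}f|\ge|S|$ for every set $S$ of edges. The guiding picture comes from the extremal configurations: a single copy of the complete hypergraph on $r+1$ vertices realizes $e=n$, whereas ``pendant'' pieces such as stars or single-core sunflowers (which have short longest Berge paths) carry a private vertex for each of their edges and are charged injectively to those private vertices. The structural heart of the matter is that a connected hypergraph with no Berge path of length $r+1$ can contain at most one cyclic block of positive excess: two essentially vertex-disjoint dense regions, each supporting a Berge path that returns near its own starting point, can be spliced through a connecting Berge path into a single Berge path on at least $r+2$ vertices, hence of length $\ge r+1$. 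Concretely, if some edge set $S$ violated Hall's condition, I would pass to a minimal such $S$; minimality forces $S$ to be connected with $|\bigcup S|=|S|-1$ and edge-critical, and I would then show that routing through this overfull region---using the one surplus edge to close and then reopen a Berge cycle---produces a Berge path of length $\ge r+1$, the desired contradiction. This would yield Hall's condition and hence $e(\h)\le n(\h)$.

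The main obstacle is exactly this last step. The examples show that $e$ can come arbitrarily close to $n$ while all Berge paths stay short (stars and single-core sunflowers), so the argument must locate the extra edge forced by $e>n$ inside a genuinely cyclic region and convert its presence into a \emph{long} path, rather than merely into a short path decorated by pendant edges. Verifying that a minimal Hall violator cannot be dissipated into tree-like pieces---equivalently, making the charging injective across the interface between the pendant part and the dense part---is the delicate point, and it is here that the borderline value $k=r+1$, for which each edge of a complete block on $r+1$ vertices is exactly balanced against one vertex, enters in a way not addressed by the $k>r+1$ analysis of Gy\H{o}ri, Katona and Lemons.
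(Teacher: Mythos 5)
Your reductions at the start are sound: passing to connected components, the equivalence of ``no Berge path of length $\ge r+1$'' with ``no Berge path of length $r+1$,'' and the disposal of the case $V(\h)=U$ via $\binom{m}{r}\le m$ for $m\le r+1$ are all correct. The minimality analysis of a Hall violator is also correct as far as it goes: a minimal $S$ with $\left|\bigcup_{f\in S}f\right|<|S|$ does satisfy $\left|\bigcup_{f\in S}f\right|=|S|-1$ and is connected. But at this point notice what you have actually produced: a connected sub-hypergraph with exactly one more edge than it has vertices. The claim that such a configuration forces a Berge path of length $r+1$ \emph{is} Theorem~\ref{main} (applied to that sub-hypergraph), so the Hall/injection framework has not reduced the problem but merely restated it in an edge-critical special case. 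The one sentence you offer for this step --- ``using the one surplus edge to close and then reopen a Berge cycle'' --- is not an argument: a surplus edge guarantees some Berge cycle, but nothing in your setup controls its length, and a short Berge cycle decorated with pendant edges is exactly the situation you identify as problematic. You flag this yourself as ``the main obstacle'' and ``the delicate point,'' and it is left unresolved; since it carries the entire content of the theorem, the proposal has a genuine gap rather than a complete proof by a different route.

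For comparison, the paper does not argue via a longest path or Hall's condition at all. It proves a stronger, vertex-rooted statement (Theorem~\ref{strongerversion}): in a connected $r$-uniform $\h$ with $e(\h)\ge n$, \emph{every} vertex $v$ starts a Berge path of length $r+1$ or lies on a Berge cycle of length $r+1$. This is proved by double induction, on $r$ and then on $n$: cut vertices are handled by splitting into blocks and splicing a path to $v_0$ with a length-$r$ path out of the heavy block; otherwise an edge $e\ni v$ is removed, a heavy component of the remainder is located by pigeonhole, and its $r$-edges are shrunk one by one to $(r-1)$-edges without disconnecting the component or creating multiple edges, so that the $(r-1)$-uniform induction hypothesis applies; Lemma~\ref{cyclelemma} then converts the resulting length-$r$ path or cycle into the desired length-$(r+1)$ object through $v$. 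The vertex-rooted strengthening and the uniformity-reduction step are precisely the devices that let the induction close at the borderline $k=r+1$, and they have no counterpart in your sketch. If you want to salvage your approach, the missing lemma you would need to prove is: a connected, edge-critical $r$-uniform hypergraph with $e=n+1$ contains a Berge path of length $r+1$ --- and it is not clear that this is any easier than the original theorem.
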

A construction with a matching lower bound when $r+1$ divides $n$ is given by disjoint complete hypergraphs on $r+1$ vertices.
Observe that by induction it suffices to prove Theorem~\ref{main} when the hypergraph is connected.  We will prove the following stronger theorem.

\begin{theorem}
\label{strongerversion}
Let $\h$ be a connected $r$-uniform hypergraph. If $e(\h)\geq n$, then for every vertex $v\in V(\h)$ either there exists a Berge path of length $r+1$ starting from $v$ or there exists a Berge cycle of length $r+1$ with $v$ as one of its vertices.
\end{theorem}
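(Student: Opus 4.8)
The plan is to run a longest–Berge–path argument from $v$, powered by a rotation/relocation technique and closed off by a counting argument that uses the hypothesis $e(\h)\ge n$. As a preliminary simplification I would reduce to the case $e(\h)=n$. Since $\h$ is connected, any spanning connected sub-hypergraph that is minimally connected uses at most $n-1$ edges (each edge that does not create a Berge cycle lowers the number of components by at least one). Hence if $e(\h)>n$ then $\h$ is not minimally connected, so some edge $f$ has $\h-f$ still connected with $e(\h-f)\ge n$; as any Berge path or cycle found in $\h-f$ also occurs in $\h$, induction on $e(\h)$ lets me assume $e(\h)=n$.

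Fix $v$ and take a longest Berge path $P=v_1e_1v_2\cdots e_\ell v_{\ell+1}$ with $v=v_1$, and set $S=\{v_1,\dots,v_{\ell+1}\}$. If $\ell\ge r+1$, the first $r+1$ edges of $P$ already give the desired path, so assume $\ell\le r$. The basic maximality input is that $P$ cannot be extended at $v_{\ell+1}$: every edge through $v_{\ell+1}$ not among $e_1,\dots,e_\ell$ must lie entirely inside $S$, as otherwise a new vertex could be appended. Two length-preserving modifications change the terminal vertex: \emph{relocation} inside the last edge (the endpoint may be replaced by any vertex of $e_\ell\setminus\{v_1,\dots,v_\ell\}$), and P\'osa-type \emph{chord rotations} (if an edge contains $v_{\ell+1}$ together with an interior vertex $v_j$, reversing the tail yields the new terminal vertex $v_{j+1}$). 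These are cleanest to verify in the bipartite incidence graph of $\h$, in which Berge paths and cycles of length $k$ are exactly the paths and cycles of graph-length $2k$; the only care needed is to keep the participating hyperedges distinct. Iterating these moves from the fixed start $v$ produces a set $X$ of reachable terminal vertices, and the maximality input upgrades to the statement that every hyperedge meeting $X$ that is not used by the corresponding rotated path is confined to $S$.

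The argument then splits on $\ell$. When $\ell=r$ we have $|S|=r+1$, and a Berge cycle of length $r+1$ through $v$ is obtained by adjoining to $e_1,\dots,e_r$ one further edge joining the two ends; the confinement statement forces any non-path edge through the terminal vertex into $S$, and I would use the rotation set $X$ to arrange that such an edge also meets $v_1=v$, thereby closing the $(r+1)$-cycle. When $\ell<r$ the last edge has $r$ vertices while $|S|=\ell+1\le r$, so for $\ell\le r-2$ relocation can place the terminal vertex on a vertex outside $S$; the plan is to combine this freedom with connectivity and with $e(\h)=n$ to force all $n$ edges onto too few vertices, contradicting the fact that an $r$-uniform hypergraph on at most $r$ vertices has at most one edge. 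In effect this shows the longest Berge path from $v$ cannot be short.

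The main obstacle is exactly this last dichotomy: converting the local ``cannot extend'' and ``cannot rotate'' constraints into a global contradiction with $e(\h)=n$, i.e.\ proving that the longest Berge path from $v$ has length at least $r$, and then guaranteeing that the single closing edge exists so the cycle has length precisely $r+1$ (rather than some shorter length that a naive closing would produce). This is where the full strength of the rotation set $X$ and of the uniformity must be used together, and where the bookkeeping needed to keep all hyperedges of the rotated paths and of the final cycle distinct is most delicate. I expect this to be the technical heart of the proof, while the reduction to $e(\h)=n$ and the truncation of over-long paths are the routine parts.
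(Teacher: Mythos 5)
Your outline takes a genuinely different route from the paper (which proves the theorem by induction on the uniformity $r$, shrinking $r$-edges to $(r-1)$-edges inside a dense component obtained after deleting $v$ and an edge through it, together with a cut-vertex decomposition and a short lemma for converting $r$-cycles into the desired path or cycle). But as written the proposal has a gap exactly at the point you yourself flag as the technical heart, and that gap is not routine bookkeeping --- it is the entire content of the theorem. All of the constraints you extract from maximality and rotation are \emph{local}: they confine only hyperedges that meet the set $X$ of reachable terminal vertices or the last edge of some rotated path. A hyperedge disjoint from every rotated path is completely unconstrained by the longest-path assumption, so in the case $\ell<r$ there is no way to ``force all $n$ edges onto too few vertices'' from these constraints alone; the hypothesis $e(\h)\ge n$ is a global count and cannot be brought to bear without an additional inductive mechanism (on $n$, on components, or on $r$), which your plan does not set up. The case $\ell=r$ has a similar hole at its base: every move you describe (relocation, rotation, closing the cycle) requires some hyperedge through the current terminal vertex other than the $r$ path edges, and when $e_\ell\subseteq S$ and the terminal vertex lies in no other edge of $\h$, no move is available at all. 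Ruling this out is again a counting problem you have not solved. (Once such an edge $g$ is granted, the rest of the $\ell=r$ case does go through: $g\subseteq S$ with $|g|=r$ and $|S|=r+1$ forces $g=S\setminus\{x\}$, which either contains $v_1$ and closes an $(r+1)$-cycle or serves as a rotation chord.) The reduction to $e(\h)=n$ and the truncation of over-long paths are correct but are the easy parts.

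For contrast, the paper never reasons about longest paths. It removes an edge $e\ni v$ and the vertex $v$, finds by pigeonhole a component $C_i$ with $e(C_i)\ge n(C_i)$, and carefully deletes one vertex from each $r$-edge of $C_i$ (avoiding multiple edges and disconnection, with separate treatment when every $(r-1)$-subset of an edge is already present) to obtain a connected $(r-1)$-uniform hypergraph still satisfying the edge count. Induction on $r$ then supplies a Berge path or cycle of length $r$ through a vertex of $e$, which is lifted back and extended by $e$ and $v$. This sidesteps precisely the global-versus-local difficulty above, because the density hypothesis is passed intact to a smaller instance rather than being confronted by a single extremal path. If you want to salvage the rotation approach, you would at minimum need to add an induction that disposes of edges not seen by the rotated paths; at that point you would be rebuilding something close to the paper's argument.
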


To see that Theorem \ref{strongerversion} implies Theorem \ref{main}, suppose $e(\h)>n$ and assume that after applying Theorem \ref{strongerversion} we find a Berge cycle of length $r+1$. If the Berge cycle is not the complete $r$-uniform hypergraph on $r+1$ vertices, then its edges span a vertex which is not a vertex of the Berge cycle. Starting from this vertex and then using all of the edges of the Berge cycle would yield a Berge path of length $r+1$. If the Berge cycle is a complete hypergraph, then by connectivity and the assumption $e(\h)>n$, there must be another hyperedge which intersects it, and we may find a Berge path of length $r+1$ again.

We will need the following Lemma in the proof of Theorem \ref{strongerversion}.
\begin{lemma}
\label{cyclelemma}
Let $v$ be a vertex and $e$ be an edge in a hypergraph $\h$ with $v \in e$.  Consider a Berge cycle of length $r$ with vertices $\{v_1,\dots,v_r\}$ and edges $\{e_1,\dots,e_r\}$ such that $v \not\in \{v_1,\dots,v_r\}$ and $e \not\in \{e_1,\dots,e_r\}$ and assume that it spans a set $X$ of vertices such that $X \cap (e\setminus \{v\}) \neq \varnothing$.  Then, there is a Berge path of length $r+1$ starting at $v$ or a Berge cycle of length $r+1$ containing $v$.
\end{lemma}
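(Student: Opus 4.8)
The plan is to prove Lemma~\ref{cyclelemma} constructively, by exhibiting the required Berge path or Berge cycle of length $r+1$ using the given edge $e$ together with all $r$ edges of the Berge cycle. Since we are adding one new edge to a collection of $r$ edges, the resulting structure will naturally have $r+1$ edges, so the task is to arrange their vertices so that the Berge path/cycle definition is satisfied with all distinct vertices.

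\medskip

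First I would fix notation for the Berge cycle: write it so that $v_i, v_{i+1} \in e_i$ for $1 \le i \le r-1$ and $v_r, v_1 \in e_r$. By hypothesis there is a vertex $w \in X \cap (e \setminus \{v\})$; being in $X = \bigcup_{i=1}^r e_i$, this $w$ lies in some cycle edge $e_j$. The idea is to ``cut'' the cycle at $e_j$ and attach $e$ through the vertex $w$, then close up the other end through $v$. Concretely, I would try to use $w$ to connect $e$ into the cyclic sequence of edges, producing a Berge path that starts at $v$ (using $e$ as its first edge, since $v \in e$) and then traverses the $r$ cycle edges in order. Because $v \notin \{v_1,\dots,v_r\}$ and $e \notin \{e_1,\dots,e_r\}$, the new edge and the new vertex $v$ are automatically distinct from everything in the cycle, which handles most of the distinctness requirements for free.

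\medskip

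The main case analysis, and the main obstacle, will be the position of the witness vertex $w$ relative to the distinguished cycle vertices $v_1,\dots,v_r$. If $w$ is \emph{not} one of the $v_i$ (i.e.\ $w$ is an ``extra'' vertex spanned by some $e_j$ but not used as a cycle vertex), then I can break the cycle at $e_j$: the sequence $e_{j+1}, e_{j+2}, \dots, e_r, e_1, \dots, e_j$ is a Berge path on the vertices $v_{j+1}, \dots, v_r, v_1, \dots, v_j, w$, and prepending $e$ via the shared vertices $v$ and $w$ yields a Berge path of length $r+1$ starting at $v$. If instead $w = v_m$ for some $m$, then $e$ shares the \emph{cycle vertex} $v_m$ with the cycle; here I would route the Berge path so that $e$ enters at $v_m$ and $v$ becomes the new free endpoint, again giving length $r+1$ — and in the subcase where the free end of the broken cycle can be joined back to $v$ through an available edge, I would instead close it into a Berge cycle of length $r+1$ through $v$. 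The delicate point is ensuring all $r+2$ vertices of the path (or $r+1$ vertices of the cycle) are genuinely distinct; since each cycle edge $e_i$ has size $r$ and supplies its two consecutive cycle vertices, there is enough room to pick distinct attachment vertices, but one must verify that the vertex of $e$ used to join the structure (namely $w$) is not accidentally forced to coincide with an already-used cycle vertex. I expect this distinctness bookkeeping, rather than any single clever construction, to be the crux of the argument.
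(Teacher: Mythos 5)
Your first case ($w \notin \{v_1,\dots,v_r\}$) is correct and is essentially the paper's construction: if $w \in e_j$, the path $v,e,w,e_j,v_j,e_{j-1},v_{j-1},\dots$ traversing the whole cycle has length $r+1$, and distinctness is automatic from $v\notin\{v_1,\dots,v_r\}$, $e\notin\{e_1,\dots,e_r\}$ and $w\neq v$.

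The gap is in the case $w=v_m$; relabel so that $w=v_1$, and note the two cycle edges through $w$ are $e_1$ and $e_r$. You assert that one can either extend to a Berge path of length $r+1$ with $v$ (or a new vertex) as a free endpoint, or else close up a Berge cycle of length $r+1$ through $v$, but you never prove that this dichotomy is exhaustive --- and that is the entire content of the lemma in this case. Concretely, the path $v,e,v_1,e_1,v_2,\dots,v_r$ has length only $r$; to reach length $r+1$ you must append $e_r$, whose only named vertices $v_r$ and $v_1$ are already used, so you need $e_r$ (or, reversing orientation, $e_1$) to contain a vertex outside $\{v_1,\dots,v_r,v\}$. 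The crucial observation, absent from your sketch, is what happens when neither $e_1$ nor $e_r$ has such a vertex: then both are \emph{distinct} $r$-element subsets of the $(r+1)$-element set $\{v_1,\dots,v_r,v\}$, so they cannot both equal $\{v_1,\dots,v_r\}$, hence one of them, say $e_r$, contains $v$, which closes the Berge cycle $v,e,v_1,e_1,v_2,\dots,v_r,e_r,v$. The crux is therefore not ``distinctness bookkeeping'' (which is trivial given the hypotheses) but this cardinality argument showing that failure to extend the path forces the cycle to close through $v$; without it your case analysis is incomplete.
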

\begin{proof}
First, suppose that $X \cap (e\setminus \{v\})$ contains a vertex $u \not\in \{v_1,\dots,v_r\}$.  Without loss of generality, let $u \in e_1$.  Then, we have the Berge path $v,e,u,e_1,v_2,e_2,\dots,v_r,e_r,v_1$ of length $r+1$ starting at $v$.  Now suppose $X \cap (e\setminus \{v\}) \subset \{v_1,\dots,v_r\}$, and assume without loss of generality that $v_1 \in X \cap (e\setminus \{v\})$.  Consider the edges $e_1$ and $e_r$. If either contains an element not in $\{v_1,\dots,v_r,v\}$, then we will find a Berge path of length $r+1$.  Indeed, suppose $u$ is such an element and $u \in e_r$, then we have the Berge path $v,e,v_1,e_1,v_2,e_2,\dots,v_r,e_r,u$.  Finally, if neither $e_1$ nor $e_r$ contains elements outside of $\{v_1,\dots,v_r,v\}$, then since they are distinct sets at least one of them contains $v$, say $e_r$.  We can then find a Berge cycle of length $r+1$ with $v$ as a vertex, namely $v,e,v_1,e_1,v_2,e_2,\dots,v_r,e_r,v$.
\end{proof}

\begin{proof}[Proof of Theorem \ref{strongerversion}]

We will use induction first on $r$, and for each $r$, on $n$.  First, we prove the statement for graph case ($r=2$). Let $G$ be a graph and fix a vertex $v\in V(G)$.  Consider a  breadth-first search spanning tree $T$ with root $v$. If there is no path of length three starting from $v$, then $T$ has two levels, $N_1(v)$ and $N_2(v)$. By assumption $G$ has at least $n$ edges. Hence, $G$ has at least one more edge than $T$. If both ends of this edge belong to $N_1(v)$, then we have a triangle containing $v$. Otherwise, it is easy to see that there is a path of length three starting from $v$.

Now, let $\h= (V,E)$ be a connected $r$-uniform hypergraph with $r \ge 3$ and let $v \in V(\h)$ be an arbitrary vertex. 

First, suppose that there is a cut vertex $v_0$, that is, the (non-uniform) hypergraph $\h' = (V',E')$ where $V' = V \setminus \{v_0\}$ and $E' = \{e \setminus \{v_0\}: e \in E\}$ is not connected.  In this case, let the connected components be $C_1,\dots,C_s$, and for each $i$, let $\h_i$ be the hypergraph attained by adding back $v_0$ to the edges in $C_i$.   At least one of these $\h_i$'s, say $\h_1$ satisfies the conditions of the theorem since, if $e(\h_i) \le n(\h_i)-1$ for all $i$, then $$e(\h) = \sum_{i=1}^s e(\h_i) \le  \sum_{i=1}^s n(\h_i) - s = n(\h) -1,$$
a contradiction. If $v \in V(\h_1)$ (this includes the case when $v=v_0$), then we are done by applying induction to $\h_1$.  Assume $v \neq v_0$ and let $v \in V(\h_i)$, $i \neq 1$, then by induction, $\h_1$ contains a Berge path of length $r$ starting from $v_0$ (as a Berge cycle of length $r+1$ with $v_0$ as a vertex yields a Berge path of length $r$ starting at $v_0$), and since $\h_i$ contains a Berge path from $v$ to $v_0$, their union is a Berge path of length at least $ r+1 $ starting at $v$, as desired. Therefore, from now on we may assume there is no cut vertex in $\h$, so in particular $v$ is not a cut vertex.

Let $e\in E(\h)$ be an edge containing $v$ and let $\h'$ be the hypergraph defined by removing $e$ from the edge set of $\h$ and deleting $v$ from all remaining edges in $\h$.
Let $C_1, \ldots, C_s$, $s\geq 1$ be the connected components of $\h'$ and observe that each of them contains a vertex of $e\setminus \{v\}$.
By the pigeonhole principle there is some component $C_i$ such that $e(C_i)\geq n(C_i)$.
In order to apply the induction hypothesis, we will replace the $r$-edges in the component $C_i$ by edges of size $r-1$ in such a way that no multiple edges are created and the component remains connected.  We proceed by considering one $r$-edge at a time and attempting to remove an arbitrary vertex from it.

Suppose for some $r$-edge, say $f$, this is not possible.  If for every vertex $u$ in $f$, replacing $f$ with $f\setminus\{u\}$ disconnects the hypergraph, then every hyperedge which intersects $f$ intersects it in only one point, and hyperedges which intersect $f$ in different points will be in different components if we delete $f$.  Let $F_1,F_2,\dots,F_r$ be the connected components in $C_i$ obtained from deleting $f$.  Then, by the pigeonhole principle we find a component $F_j$ with $e(F_j) \ge n(F_j)$ and continue the procedure on that component instead.

Thus, we may assume there exists some vertex of $f$ whose removal from $f$ does not disconnect the hypergraph. Now, consider the case when the deletion of any vertex of $f$ would lead to multiple edges in the hypergraph. This means that every $r-1$ subset of $f$ is already an edge of the hypergraph. Clearly, in this case there is a Berge cycle of length $r$ using each vertex of $f$. In the original hypergraph, if this Berge cycle spans a vertex of $e \setminus \{v\}$, then it can be extended  to a Berge path of length $r+1$ starting from $v$ or a Berge cycle of length $r+1$ with $v$ as one of its vertices by Lemma \ref{cyclelemma}. If it does not span a vertex of $e \setminus \{v\}$, then there is a Berge path of length at least two from $v$ to the Berge cycle which, in turn, can easily be extended to a Berge path of length $r+1$.

We may now assume that $f$ contains at least one element whose removal does not disconnect the hypergraph and at least one element whose removal does not create a multiple edge.  If there is an element $w$ such that removing $w$ from $f$ disconnects the hypergraph, then no element of $f \setminus \{w\}$ will yield a multiple edge if deleted (for then $w$ would not disconnect the hypergraph) and so we can find an element to remove from $f$.  If there is no such element $w$ whose removal disconnects the hypergraph, we are also done since we can simply take any element of $f$ whose removal does not make a multiple edge.

Therefore, we can transform $C_i$ into an $(r-1)$-uniform and connected hypergraph $\h^*$ satisfying $e(\h^*)\geq n (\h^*)$. By the induction hypothesis, for every vertex $z\in V(\h^*)$ there exists a Berge path of length $r$ starting from $z$ or there exists a Berge cycle of length $r$ containing $z$. Choose $z$ to be in the edge $e$.  The associated Berge path (or cycle) in original hypergraph is a Berge path (or cycle) of the same length. If the result is a Berge path, then we are done trivially by extending it with $e$ and $v$.  If the result is a Berge cycle, then we are done by Lemma \ref{cyclelemma}.
\end{proof}

\section*{Acknowledgment}
The first author's research was supported by a grant from IPM.
The research of the second, third and fourth authors is partially supported by the National Research, Development and Innovation Office NKFIH, grant K116769.


\begin{thebibliography}{10}

\bibitem{Er-Ga} Paul Erd\H os,  Tibor Gallai,  On maximal paths and
circuits of graphs. {\it Acta Math. Acad. Sci. Hungar.}  \textbf{10}, (1959)
337-356.

\bibitem{GyoKaLe} Ervin Gy\H ori, Gyula Y. Katona, Nathan Lemons,
Hypergraph extensions of the Erd\H os-Gallai
Theorem, {\it European Journal of Combinatorics} \textbf{58}, (2016) 238-246.

\end{thebibliography}
\end{document}